\documentclass[fleqn,12pt]{article}
\usepackage{amsmath,amssymb,amsthm,xcolor,framed,esint,dsfont}
\usepackage[english]{babel}
\usepackage[margin=3.4cm]{geometry}
\usepackage{csquotes}
\usepackage[bookmarks]{hyperref}
\usepackage{tikz}
\numberwithin{equation}{section}

\def\XXint#1#2#3{{\setbox0=\hbox{$#1{#2#3}{\int}$}
		\vcenter{\hbox{$#2#3$}}\kern-.5\wd0}}
        
\newcommand{\R}{{\mathbb R}}

\newcommand{\Z}{{\mathbb Z}}
\newcommand{\T}{{\mathbb T}}
\newcommand{\C}{{\mathbb C}}

\newcommand{\e}{{\epsilon}}
\newcommand{\loc}{{\mathrm{loc}}}

\newcommand{\nn}{\nonumber}
\newcommand{\ep}{\epsilon}

\DeclareMathOperator{\dist}{dist}
\DeclareMathOperator{\supp}{supp}
\DeclareMathOperator{\dv}{div}

\newtheorem{theorem}{Theorem}[section]

\newtheorem{lemma}[theorem]{Lemma}

\theoremstyle{definition}

\newtheorem{remark}[theorem]{Remark}
\title{On Aviles-Giga limit states with $L^p$ entropy productions}

\date{}
\author{Xavier Lamy\footnote{Institut de Math\'ematiques de Toulouse, UMR 5219, Universit\'e de Toulouse, CNRS, UPS
		IMT, F-31062 Toulouse Cedex 9, France. Email: Xavier.Lamy@math.univ-toulouse.fr}
	\and Andrew Lorent\footnote{Department of Mathematical Sciences, University of Cincinnati, Cincinnati, OH 45221, USA. Email: lorentaw@uc.edu} 
	\and Guanying Peng\footnote{Department of Mathematical Sciences, Worcester Polytechnic Institute, Worcester, MA 01609, USA. Email: gpeng@wpi.edu}}

\begin{document}

\maketitle

\begin{abstract}
The Aviles-Giga energy provides  sequences of maps converging to weak solutions $m\colon\Omega \subset\mathbb R^2\to\mathbb R^2$
of the eikonal equation
\begin{align*}
\mathrm{div}\, m=0\text{ in }\mathcal D'(\Omega),\quad |m|=1\text{ a.e. in }\Omega\,,
\end{align*}
whose entropy productions $\mathrm{div}\,\Phi(m)$ are Radon measures in $\Omega$, controlled by the energy.
Here, the entropies are all $C^2$ vector fields $\Phi\colon\mathbb S^1\to\mathbb R^2$ such that $\dv\Phi(m_*)=0$ for any smooth solution $m_*$.
It is conjectured that the entropy production measures are concentrated on the one-dimensional jump set of $m$,
as follows from the chain rule if $m$ has bounded variation.
In particular, the entropy production measures should vanish if they coincide with $L^p$ functions: this is what we establish in this note if $p$ is not too small and under natural boundary conditions.
\end{abstract}

\section{Introduction}

Given a bounded domain $\Omega\subset\R^2$, we consider weak solutions $m: \Omega\to\R^2$ of the 2D eikonal equation given by
\begin{equation}\label{eq:eikonal}
	|m|=1 \text{ a.e. in }\Omega, \qquad \dv m = 0 \text{ in }\mathcal{D}'(\Omega). 
\end{equation}
If $m$ is a \emph{smooth}  solution of \eqref{eq:eikonal}, the chain rule ensures that
$\dv\Phi(m)=0$ 
for any smooth vector field $\Phi\colon \mathbb S^1\to\R^2$ 
with derivative tangent to $\mathbb S^1$. 
By analogy with hyperbolic conservation laws, 
these vector fields are called entropies.
We denote $C^2$ entropies by
\begin{align*}
\mathrm{ENT}=\bigg\lbrace
\Phi\in C^2(\mathbb S^1;\R^2)\colon
&
\exists\lambda_\Phi\in C^1(\mathbb S^1;\R),
\nonumber
\\
&
\frac{d}{d\theta}\Phi(e^{i\theta})=\lambda_\Phi(e^{i\theta})ie^{i\theta}
\quad\forall\theta\in\R
\bigg\rbrace\,.
\end{align*}
If $m$ is  a general weak solution, 
we expect the distributions $\dv\Phi(m)$ to carry information about singularities of $m$,
and we are particularly interested 
 in \emph{finite entropy solutions}, i.e., weak solutions of \eqref{eq:eikonal} satisfying the finite entropy production property
\begin{equation}\label{eq:finite_entropy}
	\dv \Phi(m)\in\mathcal{M}(\Omega)\qquad \forall \Phi\in\mathrm{ENT}\,,
\end{equation}
where $\mathcal M(\Omega)$ is the set of finite Radon measures on $\Omega$.
As shown in \cite{DKMO01}, this property is satisfied by limits $m=\lim_{\textcolor{black}{\ep\to 0}} m_\e$ of sequences of vector fields $m_\e\colon\Omega\to\R^2$ with $\dv m_\e=0$ and bounded Aviles-Giga energy
\begin{align}\label{eq:AG}
E_\e(m_\e)=\int_\Omega \left( \frac\e 2 |\nabla m_\e|^2 + \frac{1}{2\e} (1-|m_\e|^2)^2\right)\, dx\leq C\,,
\end{align}
and is also relevant in some micromagnetics models \cite{JOP02}.
The conjecture that these energy functionals $\Gamma$-converge to a line-energy functional concentrated on the one-dimensional jump set of $m$ has attracted sustained attention over the past decades \cite{AG87, AG99,JK00,ADM99,DKMO01,DLO03, IM12,marconi21}.
If true, that conjecture would imply a similar concentration property for the entropy productions, namely that
\begin{align}\label{eq:concentration_ent}
\dv\Phi(m)\ll \mathcal H^1 \lfloor J_m\,,
\end{align}
for any weak finite-entropy solution of \eqref{eq:eikonal}-\eqref{eq:finite_entropy}, 
where $J_m$ is the $1$-rectifiable jump set of $m$ (in the sense of \cite{delnin21}), 
see \cite[Conjecture~1]{DLO03}.
If $m\in BV(\Omega)$, the concentration property \eqref{eq:concentration_ent} follows from the chain rule,
but solutions of \eqref{eq:eikonal}-\eqref{eq:finite_entropy} are in general not of bounded variation, see \cite{ADM99}.
In fact, a weak solution of \eqref{eq:eikonal} satisfies \eqref{eq:finite_entropy} if and only if it has
the  Besov regularity $B^{1/3}_{3,\infty}$ \cite{GL20}.

A first major step towards \eqref{eq:concentration_ent} was established in \cite{DLO03} 
by showing that $J_m$ coincides ($\mathcal H^1$-a.e.) with points  of positive one-dimensional density for (at least one of) the measures \eqref{eq:finite_entropy},
hence  $\dv\Phi(m)\lfloor \Omega\setminus J_m$  vanishes on  sets with finite $\mathcal H^1$-measure.
Conjectures very similar to \eqref{eq:concentration_ent} were recently settled by E.~Marconi in the contexts of Burgers' equation \cite{marconi22} and of a related micromagnetics model \cite{marconi23}.
These methods also brought new information on the structure of  $\dv\Phi(m)\lfloor \Omega\setminus J_m$  \cite{marconi21,LM26},
but the validity of \eqref{eq:concentration_ent} remains open.

If true, the concentration property \eqref{eq:concentration_ent} 
would imply in particular
\begin{align*}
\Big(
\forall\Phi\in\mathrm{ENT}\,,
\;\dv\Phi(m)\in L^p(\Omega) \Big)
\;
\Longrightarrow
\;
\Big(
\forall\Phi\in\mathrm{ENT}\,,
\;
\dv\Phi(m)=0
\Big)
\,,
\end{align*}
for any $p\geq 1$,
and any weak solution $m$ of the eikonal equation \eqref{eq:eikonal}.
But even this
 weaker concentration property is not known.
Our main result establishes it
under natural boundary conditions if $p\geq 2$.
Note that $p=2$ is critical in the sense that the rescaling $m_r(x)=m(rx)$ yields 
\begin{align*}
\|\dv\Phi(m_r)\|_{L^p(B_1)}=r^{1-\frac 2p}\|\dv\Phi(m)\|_{L^p(B_r)}\,.
\end{align*}
If the domain is a disk, our result also holds for subcritical exponents $p\geq 1.95$.
\color{black}
More precisely, we prove the stronger implication
\begin{align*}
\Big(
\forall\Phi\in
\lbrace \Sigma_1,\Sigma_2\rbrace\,,
\;\dv\Phi(m)\in L^p(\Omega) \Big)
\;
\Longrightarrow
\;
\Big(
\forall\Phi\in\mathrm{ENT}\,,
\;
\dv\Phi(m)=0
\Big)
\,,
\end{align*}
where $\Sigma_1,\Sigma_2$ 
are the Jin-Kohn entropies introduced in \cite{JK00}.
This entropy pair plays a fundamental role in the 
$\Gamma$-convergence conjecture for the Aviles-Giga energy \eqref{eq:AG}.
The total variation of the $\R^2$-valued measure $\dv\Sigma(m)=(\dv\Sigma_1(m),\dv\Sigma_2(m))$
provides a lower bound 
\begin{align*}
\liminf_{\e\to 0} E_\e(m_\e) \geq |\dv\Sigma(m)|(\Omega)\,,
\end{align*}
for any sequence $m_\e\to m$ in $\mathcal D'(\Omega)$ with $\dv m_\e=0$ \cite{ADM99},
and that lower bound
 is sharp if $m\in BV(\Omega)$, 
in the sense that there exists $m_\e\to m$ whose energy converges to $|\dv\Sigma(m)|(\Omega)$
\cite{CDL07,poliakovsky07}.

\begin{remark}
The Jin-Kohn entropies are polynomial fields of degree three.
One way to represent them explicitly is
via the matrix 
 $\Sigma(e^{i\theta})\in \R^{2\times 2}$ 
 with rows 
 $\Sigma_1(e^{i\theta})$, $\Sigma_2(e^{i\theta})$.
This matrix is
characterized by its conformal-anticonformal decomposition
\begin{align*}
\Sigma(e^{i\theta})z=e^{i\theta}z +\frac 13 e^{3i\theta}\bar z\,,\qquad\forall  z\in\R^2\approx \C\,.
\end{align*}
Here, the left-hand side is a matrix multiplication for $z=(z_1,z_2)\in\R^2$ and the terms in the right-hand side are complex multiplications for $z=z_1+iz_2\in\C$.
\end{remark}
\color{black}

We impose tangential boundary conditions, which are natural in the context of the Aviles-Giga energy \cite{JK00}.
More precisely, we assume that $\Omega$ is a bounded simply connected open set of class $C^2$ and that the inner trace of $m$ along 
$\partial\Omega$,
which is well-defined thanks to the trace theorem of \cite{vasseur01} and the kinetic formulation of \cite{GL20}, 
is equal to the counterclockwise unit tangent $\tau\in C^1(\partial\Omega;\mathbb S^1)$.
The trace theorem of \cite{vasseur01} is stated in a slightly different context, 
but one may also avoid referring to traces and impose an outer boundary layer condition
\begin{align}\label{eq:tangent_layer}
m(x)=\tau(\pi_{\partial\Omega}(x))\quad\forall x\in \Omega_\delta\setminus\Omega\,,
\quad
\Omega_\delta=\lbrace x\in\R^2\colon \dist(x,\Omega)< \delta\rbrace\,,
\end{align}
with $\delta>0$ small enough that  the nearest-neighbor projection $\pi_{\partial\Omega}\colon \Omega_\delta\setminus\Omega\to\partial\Omega$ is $C^1$.
Under these boundary conditions, 
if $\dv\Phi(m)=0$ in $\Omega_\delta$ for all $\Phi\in\mathrm{ENT}$, then
$\Omega$ must be a disk and $m$ a vortex,
that is,
$m(x)=ix/|x|$ in centered coordinates, see \cite[Theorem~1.2]{JOP02}.
Thus our main result can be stated as follows.

%

\begin{theorem}\label{t:Lpent_tangent}
Let $\Omega\subset\R^2$ be a bounded, $C^2$, simply connected open set,
and $m\colon \Omega\to\R^2$ a weak solution of the eikonal equation \eqref{eq:eikonal}
subject to the tangential condition \eqref{eq:tangent_layer}.
\begin{itemize}
\item If $\dv\Sigma_k(m)\in L^p(\Omega_\delta)$ for $k=1,2$ and  $p=2$ then
 $\Omega$ is a disk and $m$ is a vortex.
\item If $\Omega$ is a disk and $\dv\Sigma_k(m)\in L^p(\Omega_\delta)$  for $k=1,2$  and  $p=1.95$, then $m$ is a vortex.
\end{itemize}
\end{theorem}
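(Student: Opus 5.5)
The overall plan is to show that $L^p$ integrability of the Jin--Kohn entropy productions, with $p$ large enough, forces $\dv\Phi(m)=0$ for every $\Phi\in\mathrm{ENT}$ in $\Omega_\delta$. Once this is known, \cite[Theorem~1.2]{JOP02} gives that $\Omega$ is a disk and $m$ is a vortex. I would work with the kinetic formulation of \cite{GL20}. The function $\chi(x,s)=\mathbf 1_{m(x)\cdot s>0}$ satisfies
\begin{align*}
s\cdot\nabla_x\chi=\partial_\theta\sigma\,,\qquad \sigma\in\mathcal M(\Omega_\delta\times\mathbb S^1)\,,
\end{align*}
and $\dv\Phi(m)$ is recovered by testing $\sigma$ against $\lambda_\Phi'$. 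The Jin--Kohn pair has $\lambda_{\Sigma}$ spanning the lowest nontrivial Fourier modes, so $L^p$ control of $\dv\Sigma_k(m)$ controls the corresponding Fourier coefficients $\int\sigma\,e^{\pm 2i\theta}d\theta$ of the kinetic measure.

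\textbf{Step 1: a rigidity/energy identity.} Consider the Aviles--Giga-type quantity $\dv\Sigma(m)$ and write $m=e^{i\varphi}$ locally. The aim is an identity expressing $|\dv\Sigma(m)|$ as a positive quadratic-type functional of the jump structure, or an integral identity of Pohozaev/Jin--Kohn type over $\Omega_\delta$. Concretely, test $\dv\Sigma_k(m)$ against $x_k$ (and against suitable linear or rotational fields) and integrate by parts. The boundary layer \eqref{eq:tangent_layer} makes the boundary terms explicit in terms of $\tau$ and the curvature of $\partial\Omega$. This should yield a relation of the form
\begin{align*}
\int_{\Omega_\delta}\psi\cdot\dv\Sigma(m)\,dx = c(\Omega)-\text{(boundary terms)}\,,
\end{align*}
where the right-hand side vanishes exactly for the disk with the vortex.

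\textbf{Step 2: regularization and the $L^p$ exponent.} Mollify, $m_\varepsilon=m*\rho_\varepsilon$. The commutator $\dv\Phi(m_\varepsilon)-(\dv\Phi(m))_\varepsilon$ is controlled via $B^{1/3}_{3,\infty}$ regularity by $\varepsilon^{-1}\|m-m_\varepsilon\|^2\cdots$. Using $L^p$ integrability of $\dv\Sigma$ instead of mere finiteness, I would run a bootstrap. Since $\dv\Sigma(m)\in L^p$, the kinetic equation and velocity averaging improve the regularity of $m$ to some $B^{s}_{q,\infty}$ with $s$ above the Onsager-type threshold $1/3$. Once $m$ has regularity exceeding $B^{1/3}_{3,\infty}$ in the appropriate $c_0$ sense, the commutator estimate of De Lellis--Ignat type gives $\dv\Phi(m)=0$ for all entropies. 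The scaling computed before the theorem shows that $p=2$ is exactly the level at which the gain is dimensionless. This is why $p=2$ works in general domains, where only the scale-invariant estimate is available.

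\textbf{Step 3: the disk case below $p=2$.} On the disk, the explicit vortex boundary data allow the kinetic measure to be compared with that of the vortex, whose only singularity is the center. I would use a radial decomposition and a refined averaging lemma in which the boundary contributes no loss. This leaves room for a slightly subcritical exponent. The numerical value $1.95$ would come out of optimizing the interpolation exponents in the averaging lemma.

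\textbf{Main obstacle.} The hard step is Step 2: converting $L^p$ control of only the two Jin--Kohn productions into regularity strong enough to kill all entropy productions. The difficulty is that velocity averaging sees only two Fourier modes of $\sigma$. I would first try to exploit the algebraic structure $\Sigma(e^{i\theta})z=e^{i\theta}z+\tfrac13e^{3i\theta}\bar z$ to derive an elliptic-type equation for $e^{i\varphi}$ with $L^p$ right-hand side. Failing that, I would use the vanishing of $\dv\Sigma$ on $\mathcal H^1$-finite sets from \cite{DLO03} to rule out jumps, and then the structure results of \cite{marconi21,LM26} to propagate regularity along characteristics.
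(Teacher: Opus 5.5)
Your final reduction (vanishing of all entropy productions in $\Omega_\delta$, then \cite[Theorem~1.2]{JOP02}) matches the paper for $p=2$. The core of your argument, Step~2, rests on a mechanism that does not work, and the actual key step is missing.

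Nothing in your argument justifies a gain of differentiability above $1/3$ from $L^p$ entropy productions. The sharp exponent $1/3$ itself comes from a compensation identity, not from averaging lemmas. The natural gain, and the one the paper proves, is in \emph{integrability} at differentiability $1/3$:
\begin{itemize}
\item First, \cite[Theorem~1.8]{LP23} upgrades $L^p$ bounds on $\dv\Sigma_1(m),\dv\Sigma_2(m)$ to all odd entropies. Hence the kinetic measure has the explicit form \eqref{eq:sigma} with $F\in L^p$. This is what disposes of your ``only two Fourier modes'' obstacle.
\item Then Theorem~\ref{t:besov_reg} gives $m\in B^{1/3}_{3p,\infty}$. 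In the compensation identity of Lemma~\ref{l:comp_id} one takes $\varphi(t)=t^{3p-3}$ near $0$, so that $\Delta^{\varphi,\chi}\gtrsim|D^hm|^{3p}$ by Lemma~\ref{l:coercXivarphi}.
\item The structure of $\sigma$ bounds the $I^\tau$ term by $\|F\|_{L^p}\,\||D^hm|^{3p-3}\|_{L^{p'}}$. Since $(3p-3)p'=3p$, Young's inequality absorbs it, yielding $\int|D^hm|^{3p}\eta^2\lesssim|h|^p$.
\item The cut-off term $\int A^\tau\cdot\nabla\eta\,\eta$ is $O(\tau^2)$ only because \eqref{eq:tangent_layer} makes $m$ of class $C^1$ in the outer layer where $\nabla\eta$ lives. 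Mere $B^{1/3}_{3,\infty}$ control there gives $O(\tau^{4/3})$, which only suffices for $p\le 4/3$ (the range of \cite{LP23}).
\end{itemize}
Your proposal never uses the boundary condition in the regularity step and has no substitute for this estimate.

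Even with the correct regularity in hand, your endgame would fail. For $p=2$ the space $B^{1/3}_{6,\infty}$ is exactly scale-critical. H\"older gives $\|D^hm\|_{L^3(A)}^3\le|A|^{1/2}\|D^hm\|_{L^6}^3\lesssim|A|^{1/2}|h|$. So the De Lellis--Ignat commutator argument only returns $|\dv\Phi(m)|(A)\lesssim|A|^{1/2}$, which is essentially your hypothesis. Concluding $\dv\Phi(m)=0$ would require $\|D^hm\|_{L^3}^3=o(|h|)$.

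Two separate rigidity results do this work, \cite[Theorems~1.1 and 1.5]{LLP25}:
\begin{itemize}
\item Theorem~1.1 passes from $B^{1/3}_{6,\infty}$ regularity to vanishing entropy production.
\item Theorem~1.5 passes, on a disk, from $B^{1/3}_{3p,\infty}$ with $p>(47+\sqrt{533})/36\approx 1.947$ to the vortex.
\end{itemize}
That threshold is where $1.95$ comes from, not an optimized averaging lemma. Your Step~1 (a Pohozaev-type identity) is neither carried out nor needed, and Step~3 contains no concrete mechanism.
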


 Theorem~\ref{t:Lpent_tangent} 
is a consequence of results from our previous works \cite{LP23,LLP25}
%
and of the following new regularity result.

\begin{theorem}\label{t:besov_reg}
Let $\Omega\subset\R^2$ be a bounded open set,
and $m$ a weak solution of \eqref{eq:eikonal}.
Assume 
\begin{align}
&
m\in W^{1,1}(\Omega\setminus \textcolor{black}{\overline {\Omega'}};\mathbb S^1)
\quad\text{for some }\Omega'\subset\subset\Omega\,,
\label{eq:layer}
\\
\text{and }
&
\dv\Phi(m)\in L^p(\Omega)\qquad\text{for all {\color{black}odd} } \Phi\in\mathrm{ENT}\,,
\label{eq:Lp_ent}
\end{align}
for some $1<p\leq 2$. 
Then $m\in B^{1/3}_{3p,\infty,\loc}(\Omega)$, that is,
\begin{align*}
\sup_{|h|>0}\frac{1}{|h|^{1/3}}\| m(\cdot +h)-m \|_{L^{3p}(U\cap(U -h))} <\infty\,,
\end{align*}
for all $U\subset\subset\Omega$.
\end{theorem}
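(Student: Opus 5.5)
We follow the strategy of \cite{GL20}, where finite entropy production is shown to be equivalent to $B^{1/3}_{3,\infty}$. There the key tool is an estimate relating the cube of increments of $m$ to entropy productions tested against increments. We adapt it so that the $L^p$ integrability of the entropy productions upgrades the $L^3$ estimate to $L^{3p}$.

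\textbf{Step 1: extension to the whole plane.} Using \eqref{eq:layer}, we localize. Fix $U\subset\subset\Omega$ and a cutoff. On the layer $\Omega\setminus\overline{\Omega'}$ the map $m$ is $W^{1,1}$ and $\mathbb S^1$-valued, so the chain rule gives $\dv\Phi(m)=\lambda_\Phi(m)\, (im)\cdot\partial m$-type expressions there. These are $L^1$ and vanish by the eikonal constraint, consistent with \eqref{eq:Lp_ent}. This lets us work with test functions whose support may meet the layer without boundary terms. The layer assumption is only used to control such cutoff terms.

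\textbf{Step 2: kinetic formulation.} By \cite{GL20}, $m$ satisfies
\begin{align*}
e^{is}\cdot\nabla_x \mathbf 1_{m(x)\cdot e^{is}>0}=\partial_s\sigma(x,s),
\end{align*}
for a kinetic measure $\sigma$. The entropy productions are then $\dv\Phi(m)=\int \lambda'$-weighted integrals of $\sigma$. Since \eqref{eq:Lp_ent} holds for all odd entropies, the $s$-Fourier modes of $\sigma$ of odd order are $L^p$ functions in $x$. The odd part suffices because $\mathbf 1_{m\cdot e^{is}>0}-\frac12$ is odd under $s\mapsto s+\pi$, so only its odd modes enter. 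This gives $\sigma\in L^p_x(\mathcal M_s)$ in a weak sense, with a quantitative bound.

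\textbf{Step 3: commutator / interaction estimate.} For $h\in\R^2$ we want a pointwise form of the Golse--Perthame-type identity from \cite{GL20}: roughly
\begin{align*}
\int |D^h m|^3\,\chi \lesssim |h|\int\!\!\int |D^h \mathbf 1|\,d|\sigma|+\text{cutoff terms}.
\end{align*}
To reach $L^{3p}$ we apply this with a weight $|D^hm|^{3(p-1)}$ inserted. Alternatively, we use the localized version: on each small ball the local cube of increments is bounded by the local entropy mass. Then we apply Hölder, using $|\sigma|(B_{|h|}(x))\lesssim |h|^{2-2/p}\|\sigma\|_{L^p}$. Concretely, we aim for
\begin{align*}
|D^hm(x)|^3\lesssim \frac{1}{|h|}\int_{B_{C|h|}(x)}|\dv\Sigma(m)|\quad\text{(in an averaged sense)}.
\end{align*}
Raising it to the power $p$, applying Jensen, and integrating in $x$ gives
\begin{align*}
\|D^hm\|_{L^{3p}}^{3p}\lesssim |h|^{-p}|h|^{2p}\,|h|^{-2}\cdots.
\end{align*}
Here $p\le2$ ensures the exponents balance to $|h|^{p}$, i.e.\ $\|D^hm\|_{3p}\lesssim|h|^{1/3}$.

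\textbf{Main obstacle.} The main difficulty is making the estimate local in $x$ rather than global. The \cite{GL20} argument integrates a trilinear quantity over the whole domain. We expect to localize it through an $x$-dependent weight in the Golse--Perthame-type identity, or by applying the global estimate on each ball $B_{C|h|}(x)$ with cutoffs. The cutoff error terms are then of order $|h|^{-1}\int_{B}|D^hm|^2$, and these must be absorbed or iterated. This step is where the restriction $p\le 2$ should naturally appear.
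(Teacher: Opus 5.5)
\textbf{Verdict: there is a genuine gap.} The general setting is right, but Step 3, where the proof has to happen, does not go through, and the two ideas that make the paper's proof work are either missing or placed in the wrong step.

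\textbf{The localized estimate is false.} The bound $|D^hm(x)|^3\lesssim |h|^{-1}\int_{B_{C|h|}(x)}|\dv\Sigma(m)|$ fails even after averaging in $x$. Any smooth solution, e.g.\ $ix/|x|$ on an annulus, has zero entropy production but $D^hm\neq 0$. Equivalently, if you localize the compensation identity at scale $|h|$, the cutoff terms are as large as the main term and cannot be absorbed or iterated away. Your Jensen bookkeeping also gives $|h|^p\|F\|_{L^p}^p$ for every $p$, so $p\le 2$ does not arise there.

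\textbf{How the layer hypothesis is actually used.} The localization has to use a cutoff $\eta$ at a fixed scale, together with independent control of $D^hm$ on $\supp\nabla\eta$. That control is what the layer hypothesis supplies, but not as in your Step 1. That $\dv\Phi(m)=0$ in the layer is true but irrelevant. The boundary term is $\int_0^\tau\!\int A^{\tau'}\cdot\nabla\eta\,\eta\,dx\,d\tau'$, and the flux $A^{\tau'}$ is bounded only by $|D^{\tau'}m|$; it is nonzero even for smooth zero-entropy solutions. The paper takes $\eta\equiv 1$ on $U\supset\supset\Omega'$, so that $\nabla\eta$ lives in $\Omega\setminus\overline{\Omega'}$. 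It then uses $\int|D^{\tau'}m|\,|\nabla\eta|\lesssim\tau'\|\nabla m\|_{L^1(\Omega\setminus\overline{\Omega'})}$, which makes this term $O(\tau^2)$ instead of $O(\tau)$. The discrete integration by parts of the finite-difference part of $I^\tau$ gives another $O(\tau^2)$ error. Both are $\lesssim\tau^p$ exactly when $p\le 2$, and that is where the restriction comes from.

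\textbf{The weight must go into the kernel.} Your other idea, inserting a weight $|D^hm|^{3(p-1)}$, is the right one, but it cannot be multiplied onto an identity. The paper uses
$\Delta^{\varphi,\chi}(x,h)=\frac12\int_{\T^2}\varphi(t-s)D^h\chi(t,x)D^h\chi(s,x)\sin(t-s)\,dt\,ds$,
with $\varphi$ odd, $\pi$-periodic, and $\varphi(t)=t^{\gamma}$ near $0$, where $\gamma=3p-3$. This is coercive: $\Delta^{\varphi,\chi}\gtrsim|D^hm|^{3p}$. The terms involving the kinetic measure are bounded by $|F|\,|D^{\tau}m|^{\gamma}$ and its translate. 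Since $\gamma p'=3p$, H\"older and Young absorb them into the left-hand side, leaving $C\tau^p\|F\|_{L^p}^p$.

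\textbf{$\sigma\in L^p_x(\mathcal M_s)$ is not enough.} The bound $|F|\,|D^{\tau}m|^{\gamma}$ needs the structure $\sigma=(\delta_{\theta+\pi/2}+\delta_{\theta-\pi/2})F$ with $F\in L^p$. The integrand $\int_{\T}\varphi'(t-s)D^{\tau}\chi(t,x)\sin t\,dt$ is $O(|D^{\tau}m|^{\gamma})$ only at $s=\theta\pm\pi/2$. For generic $s$ it is merely $O(|D^\tau m|)$, which is insufficient once $\gamma>1$, i.e.\ $p>4/3$.
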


The case $1<p\leq 4/3$ of Theorem~\ref{t:besov_reg} actually does not require the  boundary layer assumption \eqref{eq:layer}
and was proved in \cite{LP23},
as well as the
 converse implication, that $\dv\Phi(m)\in L^p_{\loc}$ 
if $m$ is a weak solution of \eqref{eq:eikonal}
 with $B^{1/3}_{3p,\infty,\loc}$ regularity.
The novelty with respect to \cite{LP23} resides in the way we treat boundary terms in the regularity estimate. 

\begin{proof}[Proof of Theorem~\ref{t:Lpent_tangent} from Theorem~\ref{t:besov_reg}]
Under the assumptions of Theorem~\ref{t:Lpent_tangent} 
\color{black}
we can apply \cite[Theorem~1.8]{LP23}
to deduce that 
 $\dv\Phi(m)\in L^p(\Omega)$ for all odd  $\Phi\in\mathrm{ENT}$
 (see Remark~\ref{r:oddENT} for the representation of odd entropies corresponding to the statement in \cite{LP23}).
\color{black}
 Thus we may apply Theorem~\ref{t:besov_reg} and (noting also that $m$ is $C^1$ in the boundary layer) obtain that 
$m\in B^{1/3}_{3p,\infty}(\Omega_\delta)$.
If $p=2$, then \cite[Theorem~1.1]{LLP25} shows that $\dv\Phi(m)=0$ for all $\Phi\in\mathrm{ENT}$, 
and \cite[Theorem~1.2]{JOP02} implies that $\Omega$ is a disk and $m$ a vortex.
If $p=1.95> (47+\sqrt{533})/36$ and $\Omega$ is a disk, then \cite[Theorem~1.5]{LLP25} shows that $m$ is a vortex.
\end{proof}

\begin{remark} The methods of \cite{LLP25} 
can actually be improved to show a stronger version of Theorem~\ref{t:Lpent_tangent}, namely, if \textcolor{black}{$\dv\Sigma_k(m)\in L^p(\Omega_\delta)$ for $k=1, 2$} and $p=1.95$
then $\Omega$ must be a disk, and $m$ a vortex.
The necessary modifications would be rather lengthy, that is why we only provide the short but weaker version stated here.
\end{remark}


\paragraph{Acknowledgments.}
 XL received support from ANR project ANR-22-CE40-0006. 
 AL  was supported in part by NSF grant DMS-2406283. 
 GP was supported in part by NSF grant DMS-2206291.

\section{Proof of Theorem~\ref{t:besov_reg}}\label{s:proof_besov}

The proof of Theorem~\ref{t:besov_reg} 
relies on a kinetic formulation of the finite-entropy condition \eqref{eq:finite_entropy} and
on compensation identities inspired from \cite{GP13} in the context of scalar conservation laws and their adaptation to the eikonal equation in \cite{GL20}.

\subsection{A kinetic formulation}

If $m$ is a finite-entropy solution of \eqref{eq:eikonal}-\eqref{eq:finite_entropy},
 the indicator function
\begin{align}\label{eq:chi}
\chi(s,x)=\mathbf 1_{m(x)\cdot e^{is}>0}\,,\quad s\in\T=\R/2\pi\Z,\; x\in\Omega\,,
\end{align} 
satisfies the kinetic formulation
\begin{align}\label{eq:kin}
e^{is}\cdot\nabla_x\chi 
=\Theta \quad\text{in }\mathcal D'(\T\times\Omega)\,,
\end{align}
for some $\Theta = \partial_s\sigma(s,x)$ with $\sigma\in\mathcal{M}(\T\times\Omega)$ \cite{GL20}, see also \cite{JP01}.

\color{black}
\begin{remark}\label{r:oddENT}
This actually requires \eqref{eq:finite_entropy} only 
for entropies of the form
\begin{align*}
\Phi^\psi(z)
&
=\int_{\T}\mathbf 1_{z\cdot e^{is}>0}\psi(s)e^{is}\, ds
=\frac 12 \int_{\T}\mathbf 1_{z\cdot e^{is}>0}(\psi(s)+\psi(\pi+s))e^{is}\, ds\,,
\end{align*}
for $\psi\in C^1(\T)$ with $\int_\T \psi(s)e^{is}\, ds =0$, 
see \cite[\S~3.1]{GL20}.
Any such $\Phi^\psi$ is odd, and reciprocally, any odd $\Phi\in\mathrm{ENT}$
 can be represented as $\Phi=\Phi^\psi$.
To see this, define
 $\psi(s)=\textcolor{black}{-}\frac 12 e^{is}\cdot\frac{d}{ds}\Phi(ie^{is})$.
The oddness of $\Phi$ implies that $\psi$ is $\pi$-periodic,
and one then checks that $\Phi-\Phi^\psi$ has zero derivative along $\mathbb S^1$, hence is zero, being constant and odd.
\end{remark}
\color{black}

If $m$ further satisfies the $L^p$ entropy production property \eqref{eq:Lp_ent}, 
then the measure $\sigma$ has an explicit form (see \cite{LP23,marconi21})
in terms of
$\theta\colon\Omega\to\T$  such that $m=e^{i\theta}$,
given by
\begin{align}\label{eq:sigma}
\sigma(s,x)
=
\left(\delta_{\theta(x)+\frac\pi 2}(s) + \delta_{\theta(x)-\frac\pi 2}(s)\right) F(x)
\quad\text{for some }
F\in L^p(\Omega)\,.
\end{align}

\subsection{A coercive quantity}

We can use the indicator function \eqref{eq:chi} to build a quantity that controls finite differences of $m$.
We denote by $T^h$ and  $D^h$ the translation and finite difference operators in the $x$ variable:
\begin{align*}
	T^h\! f(x)=f(x+h),\quad D^h\! f =T^h\! f-f\,,
\end{align*}
for any function $f(x)$,
and we will frequently use the identity
\begin{align}\label{eq:Dh_prod}
	D^h(fg)=f D^h g +T^h g D^h f\,.
\end{align}
For any
$\varphi\in L^1(\T)$ odd and $\pi$-periodic,
and for the indicator function \eqref{eq:chi},  we define the function
\begin{align}\label{eq:Delta}
	\Delta^{\varphi,\chi}(x,h)
	=\frac 12 \int_{\T^2} \varphi(t-s)D^h\chi(t,x) D^h\chi(s,x)\sin(t-s)\, dtds,
\end{align}
on $\Omega^*=\lbrace (x,h)\in \Omega\times\R^2\colon x+h\in\Omega\rbrace$.
This can be rewritten as
\begin{align*}
	\Delta^{\varphi,\chi}(x,h)=\frac 12 \Xi^\varphi(m(x),T^{h}m(x))\,,
\end{align*}
where $\Xi^\varphi\colon \mathbb S^1\times\mathbb S^1\to\R$ is given by
\begin{align*}
	\Xi^\varphi(z_1,z_2)
	&
	=\int_{\mathbb S^1\times\mathbb S^1}\varphi(t-s)\sin(t-s)
	\Big(
	\mathbf 1_{e^{it}\cdot z_2 >0}-\mathbf 1_{e^{it}\cdot z_1>0}\Big)
	\nonumber
	\\
	&
	\hspace{9em}
	\cdot
	\Big(
	\mathbf 1_{e^{is}\cdot z_2 >0}-\mathbf 1_{e^{is}\cdot z_1>0}\Big)
	\, dt ds\,.
	\label{eq:Xivarphi}
\end{align*}
For a large class of odd $\pi$-periodic functions $\varphi$,
this quantity $\Xi^\varphi(z_1,z_2)$  
provides control on the distance $|z_1-z_2|$.

\begin{lemma}\label{l:coercXivarphi}
	If $\varphi\colon\R\to\R$ is odd, $\pi$-periodic 
	and $\varphi \geq 0$ on $(0,\pi/2)$, then we have
	\begin{align*}
		&
		\Xi^\varphi(z_1,z_2) \gtrsim 
		\omega_\varphi(|z_1-z_2|)\,,
		\quad
		\text{where }
		\omega_\varphi(t)=t \int_0^{\frac t 4} s\varphi(s)\, ds\,,
		\quad\forall t\in [0,2]\,.
	\end{align*}
\end{lemma}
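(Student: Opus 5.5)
The plan is to compute $\Xi^\varphi$ explicitly in terms of the angle between $z_1$ and $z_2$, reduce it to a one-dimensional integral of $k(u):=\varphi(u)\sin u$, and then bound that integral from below using the sign of $\varphi$.

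\textbf{Step 1: reduction to one variable.} Write $z_1=e^{i\alpha}$, $z_2=e^{i\beta}$, and let $d\in[0,\pi]$ be the angular distance between them, so that $|z_1-z_2|=2\sin(d/2)\le d$. The function $t\mapsto\mathbf 1_{e^{it}\cdot z_j>0}$ is the indicator of the half-circle centered at the angle of $z_j$. Hence
\begin{align*}
g(t):=\mathbf 1_{e^{it}\cdot z_2>0}-\mathbf 1_{e^{it}\cdot z_1>0}=\mathbf 1_I(t)-\mathbf 1_{I+\pi}(t)
\end{align*}
for an arc $I$ of length $d$, up to orientation. We have $\Xi^\varphi(z_1,z_2)=\int\!\!\int k(t-s)g(t)g(s)\,dt\,ds$. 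Since $\varphi$ is $\pi$-periodic and $\sin$ is $\pi$-antiperiodic, $k$ is $\pi$-antiperiodic, and so is $g$. Expanding the four blocks $I\times I$, $I\times(I+\pi)$, and so on, each contributes the same amount:
\begin{align*}
\Xi^\varphi=4\int_I\int_I k(t-s)\,dt\,ds=4\int_{-d}^{d}(d-|u|)k(u)\,du=8\int_0^d (d-u)\,\varphi(u)\sin u\,du .
\end{align*}
The last equality uses that $k$ is even, as a product of two odd functions.

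\textbf{Step 2: the case $d\le\pi/2$.} Here the integrand is nonnegative, because $\varphi\ge0$ and $\sin\ge0$ on $(0,\pi/2)$. Restrict the integral to $u\in(0,d/2)$, where $d-u\ge d/2$ and $\sin u\gtrsim u$. This gives
\begin{align*}
\Xi^\varphi\gtrsim d\int_0^{d/2}u\varphi(u)\,du .
\end{align*}
With $t=|z_1-z_2|\le d$ we have $t/4\le d/2$ and $d\ge t$, so the right-hand side is at least $\omega_\varphi(t)$.

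\textbf{Step 3: the case $d\in(\pi/2,\pi]$.} This is the main obstacle, because $k<0$ on $(\pi/2,\pi)$. Indeed, there $\varphi(u)=-\varphi(\pi-u)\le0$. More precisely, $k(\pi-u)=-k(u)$. I would split
\begin{align*}
\int_0^d=\int_0^{\pi-d}+\int_{\pi-d}^{\pi/2}+\int_{\pi/2}^{d},
\end{align*}
and substitute $u\mapsto\pi-u$ in the last piece. That piece then cancels part of the middle one, with net weight $(d-u)-(d-\pi+u)=\pi-2u\ge0$ multiplying $k(u)\ge0$ on $(\pi-d,\pi/2)$. Thus
\begin{align*}
\Xi^\varphi=8\int_0^{\pi-d}(d-u)k\,du+8\int_{\pi-d}^{\pi/2}(\pi-2u)k\,du\ \ge\ 8\int_0^{\pi/2}\min(d-u,\pi-2u)\,k(u)\,du .
\end{align*}
On $(0,\pi/4)$ the weight is at least $\pi/4$, and $\sin u\gtrsim u$. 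Hence $\Xi^\varphi\gtrsim\int_0^{\pi/4}u\varphi(u)\,du$. Since $t\le2$, we have $t/4\le1/2<\pi/4$, and therefore
\begin{align*}
\omega_\varphi(t)\le 2\int_0^{\pi/4}u\varphi(u)\,du\lesssim\Xi^\varphi ,
\end{align*}
which concludes the proof.
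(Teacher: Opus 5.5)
Your proof is correct and follows the paper's route. With $d=2\beta$, your formula $\Xi^\varphi=8\int_0^d(d-u)\varphi(u)\sin u\,du$ and its folded form $8\int_0^{\pi/2}\min(d-u,\pi-2u)\varphi(u)\sin u\,du$ (which is in fact an equality) are exactly the two-case expression the paper quotes from \cite[Lemma~3.8]{GL20}, and your lower bounds ($d-u\geq d/2$ on $(0,d/2)$, weight $\geq\pi/4$ on $(0,\pi/4)$, $\sin u\gtrsim u$, $|z_1-z_2|\leq d$) are the paper's; the only difference is that you derive that formula yourself via the $\pi$-antiperiodic decomposition $g=\mathbf 1_I-\mathbf 1_{I+\pi}$ rather than citing it.
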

\begin{proof}[Proof of Lemma~\ref{l:coercXivarphi}]
	This follows essentially from the calculations in \cite[Lemma~3.8]{GL20}, which we briefly recall here.
	Thanks to the invariance $\Xi^\varphi(e^{i\alpha}z_1,e^{i\alpha}z_2)=\Xi^\varphi(z_1,z_2)=\Xi^\varphi(z_2,z_1)$ for all $\alpha\in\R$ and $z_1,z_2\in\mathbb S^1$,
	it suffices to consider the case $z_1=e^{-i\beta}$, $z_2=e^{i\beta}$ for some $\beta\in [0,\pi/2]$.
	Then the computations in \cite[Lemma~3.8]{GL20} give
	\begin{align*}
		\frac 18 \Xi^{\varphi}(e^{-i\beta},e^{i\beta})
		=
		\left\lbrace
		\begin{aligned}
			&
			\int_0^{2\beta}\varphi(t)(2\beta - t)\sin(t)\, dt
			&
			\quad\text{if }\beta\in [0,\pi/4]\,,
			\\
			&
			\int_0^{\frac\pi 2}\varphi(t)\min(2\beta - t,\pi - 2t )\sin(t)\, dt
			&
			\quad\text{if }\beta\in [\pi/4,\pi/2]\,.
		\end{aligned}
		\right.
	\end{align*}
	Since all factors in the integrands are nonnegative,
	using that 
	\begin{align*}
		&
		2\beta-t
		\geq \beta
		\quad
		\text{ for }
		0\leq t\leq \beta\,,
		\\
		&
		\sin(t)
		\geq 2t/\pi
		\quad
		\text{ for }0\leq t \leq \pi/2,
		\\
		\text{and }
		\quad
		&
		\min(2\beta - t,\pi - 2t )
		\geq \pi/4
		\quad
		\text{ for }
		0\leq t\leq\pi/4\leq\beta
		\,,
	\end{align*}
	we obtain
	\begin{align*}
		\Xi^{\varphi}(e^{-i\beta},e^{i\beta})
		\gtrsim \beta \int_0^{\frac\beta 2}\varphi(t) t\, dt\,.
	\end{align*}
	Noting that
	$|e^{-i\beta}-e^{i\beta}|=2\sin\beta \leq 2\beta$,
	we deduce the claimed estimate.
\end{proof}

\subsection{A compensation identity}

The quantity $\Delta^{\varphi,\chi}$ defined in \eqref{eq:Delta} satisfies a compensation identity, essentially established in \cite[Lemma~3.9]{GL20}.

\begin{lemma}\label{l:comp_id}
Let $\Omega\subset\R^2$, $\T=\R/2\pi\Z$,
$\chi\in L^\infty(\T\times \Omega)$
such that the distribution $\Theta\in \mathcal D'(\T\times\Omega)$ defined by
\begin{align*}
e^{is}\cdot\nabla_x \chi =\Theta
\quad\text{ in }\mathcal D'(\T\times\Omega)\,,
\end{align*}
is of the form $\Theta=\partial_s\sigma$ for some $\sigma\in L^1(\Omega;\mathcal M(\T))$.
Then, for any
 $\varphi\in C^1(\T)$ odd and $\pi$-periodic, the function
 $\Delta^{\varphi,\chi}$ defined in \eqref{eq:Delta}
satisfies
\begin{equation}\label{eq:comp_id}
\frac{d}{d\tau}\Delta^{\varphi,\chi}(x,\tau\mathbf e_1)
=I^\tau(x) +\dv_x A^\tau (x)\,,
\end{equation}
in $\mathcal D'(\lbrace (x,\tau)\in\Omega\times\R \colon x+\tau \mathbf e_1\in\Omega\rbrace )$,
where $I^\tau$ and  $A^\tau=(A_1^\tau,A_2^\tau)$ are given by
\begin{align*}
I^\tau(x)
&
=
\int_{\T^2} (\Theta +  T^{\tau\mathbf e_1}\Theta)(s,x) \varphi(t-s)D^{\tau\mathbf e_1}\chi(t,x)\sin(t) \, ds dt\,
\\
&
\quad
-D^{\tau \mathbf e_1}\bigg[
 \int_{\T^2} \Theta(s,x) \varphi(t-s)\chi(t,x)\sin (t) \, ds dt\bigg]\,,
\\
A_1^\tau(x)
&=
\int_{\T^2} \varphi(t-s)\sin(t)  \cos(s) 
T^{\tau\mathbf e_1}\chi(t,x)D^{\tau\mathbf e_1}\chi(s,x) \, ds dt\, ,
\nonumber
\\
A_2^\tau(x)
&
=\int_{\T^2} \varphi(t-s)\sin(t)\, \sin(s)\,
 \chi(t,x)D^{\tau\mathbf e_1}\chi(s,x)\, dsdt\,.
\end{align*}
In the expression of $I^\tau$, 
integrals $\int\Theta(s,\cdot )\varphi(t-s)\,ds$  should be understood in the sense of distributions, that is,
$\int\Theta(s,\cdot)\textcolor{black}{\varphi}(t-s)\,ds=\int\varphi'(t-s)\sigma(ds,\cdot)\in L^1(\Omega)$.
\end{lemma}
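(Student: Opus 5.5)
We will verify the identity \eqref{eq:comp_id} by direct computation, first for smooth $\chi$ and then by regularization. Write $h=\tau\mathbf e_1$. By definition \eqref{eq:Delta},
\begin{align*}
2\Delta^{\varphi,\chi}(x,h)=\int_{\T^2}\varphi(t-s)\sin(t-s)\,D^h\chi(t,x)D^h\chi(s,x)\,dtds .
\end{align*}
Since $\varphi(t-s)\sin(t-s)$ is symmetric in $(t,s)$, the $\tau$-derivative equals
\begin{align*}
\int_{\T^2}\varphi(t-s)\sin(t-s)\,\partial_1 T^h\chi(t,x)\,D^h\chi(s,x)\,dtds .
\end{align*}
The key idea, as in \cite{GP13,GL20}, is to trade $\partial_1$ for the transport operator $e^{it}\cdot\nabla_x$. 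Using $\sin(t-s)=\sin t\cos s-\cos t\sin s$, we write
\begin{align*}
\sin(t-s)\,\partial_1=\cos s\,\big(\sin t\,\partial_1-\cos t\,\partial_2\big)+\cos t\,\big(\cos s\,\partial_2-\sin s\,\partial_1\big)
\end{align*}
and regroup. This should produce the combination $\sin t\,(e^{is}\cdot\nabla)$ acting on the $s$-variable, together with exact $x$-divergences. The $e^{is}\cdot\nabla$ terms then become $\Theta$ or $T^h\Theta$ via the kinetic equation. Concretely, the terms should be sorted as follows:
\begin{itemize}
\item products of the form $\partial_j(T^h\chi(t)\,D^h\chi(s))$ and $\partial_j(\chi(t)\,D^h\chi(s))$ are collected into $\dv_x A^\tau$, which explains the asymmetric appearance of $T^h$ in $A_1^\tau$ but not in $A_2^\tau$;
\item terms where $e^{is}\cdot\nabla_x$ hits $\chi(s)$ or $T^h\chi(s)$ give $\Theta$ and $T^h\Theta$, integrated against $\varphi(t-s)D^h\chi(t)\sin t$;
\item the remaining mismatch between $\Theta$ and $T^h\Theta$ multiplied by $\chi(t)$ rather than $D^h\chi(t)$ is written, via the product rule \eqref{eq:Dh_prod}, as $-D^h\big[\int\Theta\,\varphi(t-s)\chi(t)\sin t\big]$.
\end{itemize}

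The oddness and $\pi$-periodicity of $\varphi$ are needed to symmetrize in $(t,s)$ and to cancel cross terms. This computation is exactly that of \cite[Lemma~3.9]{GL20}, which we will follow, checking the signs and the placement of $T^h$.

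For the justification, we mollify $\chi$ in $x$: $\chi_\e=\chi*\rho_\e$ satisfies $e^{is}\cdot\nabla\chi_\e=\partial_s(\sigma*\rho_\e)$ on slightly smaller domains. Every term is either bilinear in bounded functions, giving $A^\tau$, or of the form bounded times $\int\varphi'(t-s)\sigma(ds,x)$, which lies in $L^1$ since $\sigma\in L^1(\Omega;\mathcal M(\T))$ and $\varphi\in C^1$. Hence all terms converge in $\mathcal D'$ as $\e\to0$, by strong $L^1_{\loc}$ convergence of $\chi_\e$ and weak convergence of the measures tested against the continuous weight.

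The main obstacle is the algebraic regrouping: identifying precisely which cross terms combine into exact divergences and which into $\Theta$. We handle it by expanding both $D^h$ factors into $T^h\chi$ and $\chi$ and matching the resulting terms one by one.
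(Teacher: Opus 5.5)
Your proposal follows the paper's proof: the algebraic identity is taken from the computation of \cite[Lemma~3.9]{GL20} for $\chi$ that is $C^1$ in $x$ with bounded $x$-gradient. That is all that mollification in $x$ provides, and all the computation needs, since only $x$-derivatives occur. The general case is then obtained by regularizing in $x$ and passing to the limit. The one point to state more precisely is the limit in $I^\tau$: weak-$*$ convergence of $\sigma_\e$ as measures would not by itself handle the products with $D^{h}\chi_\e$, which converge only a.e. What makes it work, as in the paper, is that $x\mapsto\int\varphi'(t-s)\,\sigma_\e(ds,x)$ is the $x$-mollification of an $L^1$ function and therefore converges strongly in $L^1_{\loc}$.
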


\begin{proof}[Proof of Lemma~\ref{l:comp_id}]
Assume first that, in addition to the above, $\chi$ is $C^1$ with respect to $x$ and $\nabla_x\chi\in L^\infty(\T\times\Omega)$.
Note that this implies that $\Theta$ is $C^0$ with respect to $x$, 
in the sense that $\int\Theta(s,\cdot)\psi(s)\,ds =\int \psi(s)e^{is}\cdot\nabla_x\chi(s,\cdot)\, ds$ is continuous for any $\psi\in C^1(\T)$.
Then the calculations of \cite[Lemma~3.9]{GL20} give the validity of \eqref{eq:comp_id} pointwise, see also \cite[Lemma~2.6]{BGLS25} which uses the same notation as here.

For a general  $\chi\in L^{\infty}(\T\times\Omega)$, one may therefore apply \eqref{eq:comp_id} to
a regularization $\chi_\e=\chi *_x \rho_\e$ by convolution with respect to the $x$ variable,
and the corresponding $\Theta_\e=\partial_s\sigma_\e$.
Then, for a fixed $\Omega'\subset\subset\Omega$, 
we have that $\chi_\e$ is uniformly bounded, 
  $\chi_\e\to\chi$ a.e., 
and for any test function $\psi\in C^1(\T)$, 
the functions $g_\e =\int_\T \psi(s)\sigma_\e(ds,\cdot)$ converge to 
$g=\int_\T \psi(s)\textcolor{black}{\sigma}(ds,\cdot)$ strongly in $L^1(\Omega')$.
One can therefore, by dominated convergence, let $\e\to 0$  in the distributional formulation of \eqref{eq:comp_id} for $\chi_\e$, and conclude that \eqref{eq:comp_id} holds for
$\chi$ in the sense of distributions.
\end{proof}

\subsection{Estimates for the right-hand side of the compensation identity}

We give estimates on $A^\tau$ and $I^\tau$ appearing in the right-hand side of \eqref{eq:comp_id}. For $A^\tau$ we simply invoke \cite[Lemma~2.7]{BGLS25}, which we restate here.

\begin{lemma}[{\cite{BGLS25}}]\label{l:estim_A}
Let $m\colon\Omega\to\mathbb S^1$ measurable and $\chi(s,x)=\mathbf 1_{e^{is}\cdot m(x) >0}$. 
Then the vector field $A^\tau$ in \eqref{eq:comp_id} satisfies
\begin{align*}
|A^\tau|\lesssim \|\varphi\|_{L^1(\T)} \, |D^{\tau \mathbf e_1} m|
\qquad\text{a.e. in }\Omega\cap(\Omega-\tau\mathbf e_1)\,.
\end{align*}
\end{lemma}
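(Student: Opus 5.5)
Since $|D^{\tau\mathbf e_1}m|\le 2$ and everything is pointwise, the bound reduces to an elementary estimate on the kernel $\Xi$-type integrals defining $A_1^\tau$ and $A_2^\tau$. Fix $x$ and write $z_1=m(x)$, $z_2=T^{\tau\mathbf e_1}m(x)$. The key observation is that $D^{\tau\mathbf e_1}\chi(s,x)=\mathbf 1_{e^{is}\cdot z_2>0}-\mathbf 1_{e^{is}\cdot z_1>0}$ is supported in the set of $s$ lying between the two half-circles. This set is the union of two arcs of total length $2\angle(z_1,z_2)\lesssim |z_1-z_2|$, since the angle between unit vectors is comparable to their chord distance. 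Also $|D^{\tau\mathbf e_1}\chi|\le 1$.

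For $A_2^\tau$, I would bound crudely. With $|\chi(t,x)|\le 1$ and $|\sin t\sin s|\le 1$ we get
\begin{align*}
|A_2^\tau(x)|\le \int_{\T}|D^{\tau\mathbf e_1}\chi(s,x)|\int_\T|\varphi(t-s)|\,dt\,ds
=\|\varphi\|_{L^1(\T)}\,\big|\{s\colon D^{\tau\mathbf e_1}\chi(s,x)\neq0\}\big|.
\end{align*}
This is $\lesssim\|\varphi\|_{L^1(\T)}|z_1-z_2|$ by the arc-length observation. The same Fubini argument, integrating first in $t$ against $|\varphi(t-s)|$ and then in $s$ over the support of $D^{\tau\mathbf e_1}\chi$, handles $A_1^\tau$, where $T^{\tau\mathbf e_1}\chi$ replaces $\chi$. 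Both are bounded by $1$, so no cancellation is needed and the oddness of $\varphi$ plays no role.

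The only point needing care is measurability and the a.e. qualifier. The pointwise formulas for $A^\tau$ make sense for every $x\in\Omega\cap(\Omega-\tau\mathbf e_1)$ at which $m(x)$ and $m(x+\tau\mathbf e_1)$ are defined unit vectors, which holds a.e. The geometric fact about the arc length, $|\{s: \operatorname{sign}(e^{is}\cdot z_1)\neq\operatorname{sign}(e^{is}\cdot z_2)\}|=2\angle(z_1,z_2)\le \pi|z_1-z_2|$, is the one step I would verify explicitly. I would do this by rotating so that $z_{1,2}=e^{\mp i\beta}$, in which case the set consists of $s$ within $\beta$ of $\pm\pi/2$, and then using $2\beta\le\pi\sin\beta$ for $\beta\in[0,\pi/2]$. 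I expect no real obstacle; the main one is just this bookkeeping.
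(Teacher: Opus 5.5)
Your proof is correct. The Fubini bound $\|\varphi\|_{L^1(\T)}\,\big|\{s\colon D^{\tau\mathbf e_1}\chi(s,x)\neq 0\}\big|$ follows once you bound $\chi$, $T^{\tau\mathbf e_1}\chi$ and the trigonometric factors by $1$. The symmetric difference of the two half-circles has measure $4\beta=2\angle(m(x),m(x+\tau\mathbf e_1))$. Jordan's inequality $2\beta\le\pi\sin\beta$ then gives $|A^\tau_k|\le\pi\|\varphi\|_{L^1(\T)}|D^{\tau\mathbf e_1}m|$ for $k=1,2$.

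The paper gives no argument of its own for this lemma; it only quotes \cite[Lemma~2.7]{BGLS25}. Your computation is therefore a complete, self-contained proof of the cited estimate, and it confirms that neither the oddness nor the $\pi$-periodicity of $\varphi$ is needed.
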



For $I^\tau$, we establish the following estimate.

\begin{lemma}\label{l:estim_I_C2}
	Let $m\colon\Omega\to\mathbb S^1$ such that $\chi(s,x)=\mathbf 1_{e^{is}\cdot m(x) >0}$
	satisfies \eqref{eq:kin}
	with $\Theta=\partial_s\sigma$ and $\sigma$ given by \eqref{eq:sigma}.
	Assume that the odd $\pi$-periodic function $\varphi$ in \eqref{eq:Delta} is $C^1$ and satisfies
	$|\varphi(t)|\lesssim |t|^\gamma$ for some $\gamma > 0$ and all $t\in (0,\pi/2]$.
	Then, for any $\eta\in C_c^1(\Omega;[0,1])$, 
	the function $I^\tau$ in \eqref{eq:comp_id} satisfies
	\begin{align*}
		\int_{\Omega} I^\tau\,\eta^2\, dx
		&
		\lesssim  
		\|\eta^2
		|D^{\tau\mathbf e_1}m|^{\gamma}
		\big\|_{L^{p'}(\Omega)} 
		\|F\|_{L^p(\Omega)}
		\\
		&
		\quad
		+ 
		|\tau | 
		\|\varphi'\|_{L^1(\T)}
		\|\nabla \eta\|_\infty 
		\|F\|_{L^1(\Omega)}
		\,,
	\end{align*}
	for $|\tau|\leq \dist(\supp(\eta),\partial\Omega)$.
\end{lemma}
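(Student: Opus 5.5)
Write $h=\tau\mathbf e_1$. Using \eqref{eq:sigma}, $\int\Theta(s,x)\psi(s)\,ds=-\int\psi'(s)\,\sigma(ds,x)$ for smooth $\psi$, so every $s$-integral against $\Theta$ becomes an evaluation at $s=\theta(x)\pm\pi/2$ weighted by $F(x)$. The approach is to split $I^\tau=I_1+I_2$, where $I_1$ is the first line (the term with $\Theta+T^h\Theta$) and $I_2=-D^h G$ with $G(x)=\int_{\T^2}\Theta(s,x)\varphi(t-s)\chi(t,x)\sin t\,ds\,dt$. Then $I_1$ should give the $L^{p'}\times L^p$ term and $I_2$ the gradient-of-$\eta$ term.

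\textbf{Term $I_2$.} We have $G(x)=\pm\int\int\varphi'(t-s)\chi(t,x)\sin t\,dt\,\sigma(ds,x)$, so $|G|\lesssim\|\varphi'\|_{L^1}|F|$ pointwise. Testing against $\eta^2$ and changing variables,
\begin{align*}
\int_\Omega D^hG\,\eta^2\,dx=\int_\Omega G\,(T^{-h}\eta^2-\eta^2)\,dx ,
\end{align*}
which is legitimate because $|\tau|\le\dist(\supp\eta,\partial\Omega)$. Since $|T^{-h}\eta^2-\eta^2|\le 2|\tau|\|\nabla\eta\|_\infty$, this term is bounded by $|\tau|\|\varphi'\|_{L^1}\|\nabla\eta\|_\infty\|F\|_{L^1}$.

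\textbf{Term $I_1$.} By symmetry (translate by $-h$, which swaps the roles of $x$ and $x+h$ and changes the sign of $D^h\chi$), it suffices to handle the $\Theta(s,x)$ part; the $T^h\Theta$ part gives the same bound with $\eta^2$ replaced by its translate. Integrating by parts in $s$ gives
\begin{align*}
\int\Theta(s,x)\varphi(t-s)\,ds=\int\varphi'(t-s)\,\sigma(ds,x)=F(x)\big[\varphi'(t-\theta-\tfrac\pi2)+\varphi'(t-\theta+\tfrac\pi2)\big],
\end{align*}
and since $\varphi$ is $\pi$-periodic the bracket equals $2\varphi'(t-\theta(x)-\pi/2)$. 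We therefore need to bound
\begin{align*}
J(x)=\int_\T\varphi'\big(t-\theta(x)-\tfrac\pi2\big)\,D^h\chi(t,x)\sin t\,dt .
\end{align*}
Here $D^h\chi(\cdot,x)$ is $\pm1$ on two arcs of length $\beta\approx|D^hm(x)|$ and $0$ elsewhere. The endpoints of these arcs are $\theta(x)\pm\pi/2$ and $\theta(x+h)\pm\pi/2$, so one endpoint of each arc sits at a zero of $\varphi'(\cdot-\theta-\pi/2)$'s primitive $\varphi(\cdot-\theta-\pi/2)$, i.e. where $\varphi$ vanishes.

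The key step is to integrate by parts in $t$ on each arc: $\int_a^b\varphi'(t-c)\sin t\,dt=[\varphi(t-c)\sin t]_a^b-\int_a^b\varphi(t-c)\cos t\,dt$. The boundary terms are $\varphi$ evaluated at $0$ (giving zero) and at $\pm\beta$ (giving at most $|\beta|^\gamma$). The remaining integral is at most $\int_0^\beta|\varphi|\lesssim\beta^{1+\gamma}\lesssim\beta^\gamma$. Hence $|J(x)|\lesssim|D^hm(x)|^\gamma$ for $\beta\le\pi/2$. For larger $\beta$, $|D^hm|$ is bounded below and the bound follows trivially from $|J|\lesssim\|\varphi\|_\infty\lesssim1$.

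It follows that $\int I_1\eta^2\lesssim\int|F|\,|D^hm|^\gamma\eta^2+\int|T^hF|\,|D^hm|^\gamma\eta^2$. Hölder's inequality controls the first term by $\|\eta^2|D^hm|^\gamma\|_{L^{p'}}\|F\|_{L^p}$. The second term needs care: after translating it carries $T^{-h}\eta^2$ instead of $\eta^2$. I would either absorb the difference $T^{-h}\eta^2-\eta^2$ into the second error term using $|D^hm|^\gamma\le 2^\gamma$, or note that the stated inequality is meant with the symmetric form. I expect the main obstacle to be this bookkeeping: tracking the exact arc endpoints and signs so that the vanishing of $\varphi$ really kills the far endpoint, and handling the translated cutoff.
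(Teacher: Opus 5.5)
Your proposal is correct and follows the paper's proof essentially step for step. It uses the same splitting of $I^\tau$ into the two $\Theta$-terms and the $D^{\tau\mathbf e_1}$-term, and the same evaluation of the $\sigma$-integrals via \eqref{eq:sigma} and the $\pi$-periodicity of $\varphi'$. The same integration by parts in $t$ exploits $\varphi(0)=\varphi(-\pi)=0$; the paper packages this step as the function $H$ in \eqref{eq:H}. The last term is again handled by a discrete integration by parts together with $|G|\lesssim\|\varphi'\|_{L^1}|F|$.

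The translated-cutoff obstacle you anticipate does not arise. From your pointwise bound $|T^{\tau\mathbf e_1}F|\,|D^{\tau\mathbf e_1}m|^\gamma$, H\"older applies directly with $\|T^{\tau\mathbf e_1}F\|_{L^p(\supp\eta)}\le\|F\|_{L^p(\Omega)}$, since $|\tau|\le\dist(\supp\eta,\partial\Omega)$. So no translation of $\eta$ and no absorption argument is needed.
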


\begin{proof}[Proof of Lemma~\ref{l:estim_I_C2}]
Recalling the expression of $I^\tau$ in \eqref{eq:comp_id},
plugging in $\Theta=\partial_s\sigma$ as in \eqref{eq:sigma}
and using that $\varphi'$ is $\pi$-periodic, 
we have
\begin{align*}
I^\tau
&
=I^\tau_1 + I^\tau_2
-D^{\tau \mathbf e_1}I_3\,,
\\
I^\tau_1(x)
&=
2F(x)\int_{\T}  \varphi'(t-\theta(x)-\frac\pi 2)D^{\tau\mathbf e_1}\chi(t,x)\sin(t) \, dt\,,
\\
I^\tau_2(x)
&=
2F(x+\tau\mathbf e_1)\int_{\T}  
\varphi'(t-\theta(x+\tau\mathbf e_1)-\frac\pi 2)
D^{\tau\mathbf e_1}\chi(t,x)\sin(t) \, dt\,,
\\
I_3(x)
&=
2F(x)\int_{\T}  \varphi'(t-\theta(x)\textcolor{black}{-\frac \pi 2})\chi(t,x)\sin (t) \, ds dt
\,.
\end{align*}
Recalling that  $\chi(t,x)=\mathbf 1_{e^{it}\cdot m(x)>0}$,
we rewrite these as
\begin{align*}
 I^\tau_1(x)
&
=
2F(x) \Big(G(im(x),m(x+\tau \mathbf e_1))-G(im(x),m(x)) \Big) 
 \,,
\\
 I^\tau_2(x)
&
=
2F(x+\tau \mathbf e_1) \Big(G(im(x+\tau \mathbf e_1),m(x+\tau \mathbf e_1))-G(im(x+\tau \mathbf e_1),m(x)) \Big) \,,
\\
I_3(x)
&=
2F(x) G(im(x),m(x))
\,,
\end{align*}
where
\begin{align}\label{eq:G}
G(e^{i\alpha},e^{i\theta})=\int_{\theta-\frac\pi 2}^{\theta +\frac\pi 2}\varphi'(t-\alpha)\sin(t)\, dt\,.
\end{align}
Note that $G\colon\mathbb S^1\times\mathbb S^1\to\R$ is well-defined: its expression does not depend on the choice of $\textcolor{black}{\alpha}$ modulo $2\pi$ since $\varphi$ is $2\pi$-periodic, nor on the choice of $\theta$ modulo $2\pi$ since 
$\int_0^{2\pi}\varphi'(t-s)\sin(t)\,dt =0$  as a consequence of the $\pi$-periodicity of $\varphi$.
Using that 
\begin{align*}
\frac{d}{dt}\Big[G(e^{i\alpha},e^{it})\Big]&
=2\varphi'(t+\frac\pi 2 - \alpha)\sin(t +\frac\pi 2)\,,
\end{align*} 
we find,
for any $z=e^{i\theta}\in\mathbb S^1$,
\begin{align*}
\frac{G(iz,e^{i\beta}z)-G(iz,z)}{2}
&
=\frac 12 \int_{0}^\beta \frac{d}{dt}\Big[ G(e^{i(\theta+\frac\pi 2)},e^{i(\theta+t)})\Big]
\, dt
\\
&
=\int_0^\beta
\varphi'(t)\sin(\theta+t+\frac\pi 2)\, dt
\\
&
=
\varphi(\beta)\cos(\theta+\beta)+\int_0^\beta
\varphi(t)\sin(\theta+t)\, dt\, ,
\end{align*}
and can therefore further rewrite $I_1^\tau$ and $I_2^\tau$ as
\begin{align}
 I^\tau_1(x)
&
=
4F(x) 
\,
H(m(x), m(x+\tau\mathbf e_1)/m(x))\,,\nn
\\
 I^\tau_2(x)
&
=
4F(x+\tau \mathbf e_1) \,
H(m(\textcolor{black}{x+\tau\mathbf e_1}), m(x+\tau\mathbf e_1)/m(x))\,,
\nonumber
\\
\text{where }
H(e^{i\theta},e^{i\alpha})
&
=\varphi(\alpha)\cos(\theta+\alpha)+\int_0^{\alpha}
\varphi(t)\sin(\theta+t)\, dt \,.
\label{eq:H}
\end{align}
Here $z_1/z_2=z_1\bar z_2$ for $z_1,z_2\in\mathbb S^1$ is the division of complex numbers.
Note that $H$ is well-defined on $\mathbb S^1\times\mathbb S^1$ since its expression is $2\pi$-periodic with respect to both $\theta$ and $\alpha$, 
thanks to the $\pi$-periodicity of $\varphi$.
Summarizing, we have
\begin{align*}
 I^\tau_1(x)
&
=
4F(x) 
\,
H(m(x), m(x+\tau\mathbf e_1)/m(x))\,,
\\
 I^\tau_2(x)
&
=
4F(x+\tau \mathbf e_1) \,
H(m(\textcolor{black}{x+\tau\mathbf e_1}), m(x+\tau\mathbf e_1)/m(x))\,,
\\
I_3(x)
&=
2F(x)\, G(im(x),m(x))
\,,
\end{align*}
with $G$ as in \eqref{eq:G} and $H$ as in \eqref{eq:H}.
The function $G$ satisfies
\begin{align*}
|G|\lesssim \|\varphi'\|_{L^1}\,.
\end{align*}
And, recalling  $|\varphi(t)|\lesssim |t|^\gamma$, the function $H$ satisfies
\begin{align*}
|H({\color{black}w},e^{i\alpha})|\lesssim |\alpha|^{\gamma}\,,
\quad
\text{
hence }
|H(\textcolor{black}{w},z'/z)|\lesssim |z'/z-1|^{\gamma}=|z'-z|^{\gamma}\,,
\end{align*}
for all ${\color{black}w},z,z'\in\mathbb S^1$.
Thus we have
\begin{align*}
|I_1^\tau|
&
\lesssim |F| \, |D^{\tau\mathbf e_1}m|^{\gamma}\,,
\\
|I_2^\tau|
&
\lesssim T^{\tau\mathbf e_1}|F|\,|D^{\tau\mathbf e_1}m|^{\gamma}\,,
\\
|I_3|
&
\lesssim \|\varphi'\|_{L^1(\T)} |F|\,.
\end{align*}
Integrating  $I^\tau$ against $\eta^2\, dx$,
performing a discrete integration by parts using \eqref{eq:Dh_prod} in the term involving $D^{\tau \mathbf e_1} \textcolor{black}{I_3}$,
and using Hölder's inequality, we obtain the claimed estimate.
\end{proof}


\subsection{Regularity estimate}

\begin{proof}[Proof of Theorem~\ref{t:besov_reg}]
 Fix some $\Omega'\subset\subset U \subset\subset \Omega$, and a smooth cut-off function $\eta\in C_c^{\infty}(\Omega)$ such that $\mathbf 1_{U}\leq \eta\leq \mathbf 1_\Omega$.

%
%
Thanks to the assumption \eqref{eq:Lp_ent}, the indicator function $\chi$ satisfies the kinetic formulation \eqref{eq:kin} with $\Theta=\partial_s\sigma$ as in \eqref{eq:sigma}.
For any odd $\pi$-periodic $\varphi\in C^1(\R)$ we may therefore apply the compensation identity 
\eqref{eq:comp_id}.
Let $h=\tau\mathbf e_1$ with 
$0<\tau<r_0 = \min\{\dist(\supp (\eta),\partial\Omega), \dist(\Omega', \partial U)\}$, then we have
\begin{align}\label{eq:estim_Delta}
		\int_{\Omega} \Delta^{\varphi,\chi}(x,h)\,\eta^2\,dx = \int_0^\tau\bigg(\int_{\Omega}I^{\tau'}\eta^2\,dx - 2\int_{\Omega} (A^{\tau'}\cdot\nabla\eta)\,\eta\,dx
\bigg)\,d\tau'.
	\end{align}
We assume also that
 $|\varphi(t)|\lesssim |t|^\gamma$ for all $t\in (0,\pi/2]$.
 Then, from Lemma~\ref{l:estim_I_C2} and Lemma~\ref{l:estim_A}, we have
\begin{align*}
		\int_0^\tau\int_{\Omega}I^{\tau'}\eta^2\,dx\,d\tau'
		&
		\lesssim  
		\tau \sup_{\tau'\leq\tau}\|\eta^2
		|D^{\tau'\mathbf e_1} m|^{\gamma}
		\big\|_{L^{p'}(\Omega)} 
		\|F\|_{L^p(\Omega)}\nonumber
		\\
		&
		\quad
		+ 
		\tau^2 
		\|\varphi'\|_{L^1(\T)}
		\|\nabla \eta\|_\infty 
		\|F\|_{L^1(\Omega)}
		\,,
		\\
		 \int_0^\tau\int_{\Omega} | A^{\tau'}\cdot\nabla\eta |\,\eta\,dx\,d\tau' 
		 &
		 \lesssim \tau\|\varphi\|_{L^1(\T)} \, \sup_{\tau'\leq\tau}\int_{\Omega}|D^{\tau' \mathbf e_1} m|\, |\nabla\eta|\,\eta\,dx\,. 
\\
&
\lesssim \tau^2 \|\varphi\|_{L^1(\T)}\|\nabla\eta\|_\infty\|\nabla m\|_{L^1(\Omega\setminus\overline{\Omega'})}\,,
	\end{align*}
where the last inequality follows from the facts that 
$\supp(\eta |\nabla\eta|)+B_{r_0}\subset\Omega\setminus\overline{\Omega'}$ and $m\in W^{1,1}(\Omega\setminus\overline{\Omega'})$.
Plugging these inequalities into \eqref{eq:estim_Delta}
gives
\begin{align*}
\int_{\Omega} \Delta^{\varphi,\chi}(x,h)\,\eta^2\,dx
&
\lesssim
		\tau 
		\sup_{\tau'\leq\tau}
		\big\|\eta^2
		|D^{\tau'\mathbf e_1} m|^{\gamma}
		\big\|_{L^{p'}(\Omega)} 
		\|F\|_{L^p(\Omega)}\nonumber
		\\
		&
		\quad
		+ 
		\tau^2 
		\|\varphi'\|_{L^1(\T)}
		\|\nabla \eta\|_\infty 
		\|F\|_{L^1(\Omega)}\\
		&\quad+\tau^2\|\varphi\|_{L^1(\T)} \, \|\nabla\eta\|_{\infty}\,\|\nabla m\|_{L^1(\Omega\setminus\overline{\Omega'})}\,. 
	\end{align*}
This estimate is valid for any $\varphi$ odd, $\pi$-periodic and $C^1$ such that $|\varphi(t)|\lesssim |t|^\gamma$ for $t\in (0,\pi/2]$.
By approximation, this is also true for 
 $\varphi\colon \R\to\R$  odd, $\pi$-periodic, and $C^1$ on $(0,\pi)$ such that $\varphi\geq 0$ on $(0,\pi/2)$ and $\varphi(t) = t^\gamma$ for $t\in(0,\pi/4)$, with $\gamma = 3p-3$.
(If $\gamma>1$ this $\varphi$ is $C^1$ on $\R$ and no approximation is needed.) 
 For such a choice, Lemma~\ref{l:coercXivarphi} ensures that
\begin{equation*}
		\Delta^{\varphi,\chi}(x,h)\geq c|D^{h} m(x)|^{3+\gamma} = c|D^{h} m(x)|^{3p}\,,
	\end{equation*}
so we deduce
\begin{align*}
\int_{\Omega} |D^{\tau\mathbf e_1} m|^{3p}\,\eta^2\,dx
&
\lesssim
		\tau 
		\sup_{\tau'\leq\tau}
		\big\|\eta^2
		|D^{\tau'\mathbf e_1} m|^{\gamma}
		\big\|_{L^{p'}(\Omega)} 
		\|F\|_{L^p(\Omega)}\nonumber
		\\
		&
		\quad
		+ 
		\tau^2 
		\|\varphi'\|_{L^1(\T)}
		\|\nabla \eta\|_\infty 
		\|F\|_{L^1(\Omega)}\\
		&\quad+\tau^2\|\varphi\|_{L^1(\T)} \, \|\nabla\eta\|_{\infty}\,\|\nabla m\|_{L^1(\Omega\setminus\overline{\Omega'})}\,. 
	\end{align*}
	Note that $\gamma\, p'= 3p$. 
	Taking a supremum over $\tau$ in the left hand side, and using Young's inequality and the fact that $\eta\leq 1$ in the first term on the right hand side, we obtain
	\begin{align*}
		\sup_{\tau'\leq\tau}\int_{\Omega}|D^{\tau'\mathbf e_1} m|^{3p}\,\eta^2\,dx &
		\leq \frac{1}{2}\sup_{\tau'\leq\tau}\int_{\Omega}
		|D^{\textcolor{black}{\tau'\mathbf e_1}} m|^{3p}\,\eta^2\,dx+
		C\,\tau^p 
		\|F\|_{L^p(\Omega)}^p\nonumber
		\\
		&
		\quad
		+ 
		C\tau^2 
		\|\varphi'\|_{L^1(\T)}
		\|\nabla \eta\|_\infty 
		\|F\|_{L^1(\Omega)}\\
		&\quad+C\tau^2\|\varphi\|_{L^1(\T)} \, \|\nabla\eta\|_{\infty}\,\|\nabla  m\|_{L^1(\textcolor{black}{\Omega\setminus\overline{\Omega'}})}\,, 
	\end{align*}
hence, recalling $1<p\leq 2$,
	\begin{align*}
\int_{\Omega}|D^{h} m|^{3p}\,\eta^2\,dx \leq C |h|^p\quad\text{for }h=\tau\mathbf e_1,\;0<\tau<r_0\,,
	\end{align*}
where the constant $C>0$ depends on $p$, $\|F\|_{L^p(\Omega)}$,  $\Omega$, $\Omega'$, $U$ and $m_{\lfloor \Omega\setminus\overline{\Omega'}}$.
Rotating the variable, this holds for every $0<|h|<r_0$,
and since $|D^h m|\leq 2$, this holds for every $|h|>0$, 
with a possibly larger constant $C>0$.
Recalling that $\eta\equiv 1$ on $U$, we deduce that $m\in B^{1/3}_{3p,\infty}(U)$.
\end{proof}

\bibliographystyle{acm}
\bibliography{AG}

\end{document}